\newcommand{\Z}{\mathbb{Z}}
\newcommand{\R}{\mathbb{R}}
\newcommand{\C}{\mathbb{C}}
\newcommand{\Q}{\mathbb{Q}}
\newcommand{\X}{\mathcal{X}}
\newcommand{\B}{\mathcal{B}}
\newcommand{\W}{\mathcal{W}}
\newcommand{\sslash}{/\!\!/}
\newcommand{\T}{\mathcal{T}}
\newcommand{\QF}{\mathcal{QF}}
\newcommand{\CP}{\mathbb{CP}}
\renewcommand{\H}{\mathbb{H}}
\newcommand{\ML}{\mathcal{ML}}
\newcommand{\PML}{\mathbb{P}\mathcal{ML}}
\renewcommand{\Tilde}[1]{\widetilde{#1}}
\newcommand{\scc}{\mathcal{S}}
\newcommand{\boldpoint}[1]{\smallskip\par\noindent\textbf{#1}}
\newcommand{\param}{{\mathchoice{\mkern1mu\mbox{\raise2.2pt\hbox{$\centerdot$}}\mkern1mu}{\mkern1mu\mbox{\raise2.2pt\hbox{$\centerdot$}}\mkern1mu}{\mkern1.5mu\centerdot\mkern1.5mu}{\mkern1.5mu\centerdot\mkern1.5mu}}}
\DeclareMathOperator{\Hom}{Hom}
\DeclareMathOperator{\PSL}{\mathrm{PSL}}
\DeclareMathOperator{\SL}{\mathrm{SL}}
\DeclareMathOperator{\gr}{gr}
\DeclareMathOperator{\Log}{Log}
\DeclareMathOperator{\tr}{tr}
\DeclareMathOperator{\cone}{Cone}
\DeclareMathOperator{\isom}{Isom}
\DeclareMathOperator{\hol}{hol}
\DeclareMathOperator{\arccosh}{arccosh}
\numberwithin{equation}{section}
\theoremstyle{plain}
\newtheorem{thm}{Theorem}[section]
\newtheorem{cor}[thm]{Corollary}
\newtheorem{lem}[thm]{Lemma}
\newtheorem{bigthm}{Theorem}
\theoremstyle{definition}
\theoremstyle{remark}
\newtheorem*{remark}{Remark}
\begin{document}

\title{Bers slices are Zariski dense}
\author{David Dumas and Richard P. Kent IV}
\date{May 5, 2009}
\thanks{Both authors supported by NSF postdoctoral research fellowships.}
\maketitle

Let $S$ be a closed oriented surface of genus $g \geq 2$, and let
$\X_\C(S)$ denote the variety of $\SL_2\C$-characters of $\pi_1(S)$.
The quasi-Fuchsian space $\QF(S)$ is an open subset of the smooth
locus of $\X_\C(S)$ equipped with a biholomorphic parametrization by
a product of Teichm\"uller spaces
$$ Q : \T(S) \times \T(\overline{S}) \to \QF(S). $$
The space $\QF(S)$ parametrizes the quasi-Fuchsian representations of
$\pi_1(S)$, and the parametrization $Q$ is due to Bers \cite{bers}.

Picking a point in one factor gives a \emph{Bers slice}
$$ \B_Y = Q\left( \T(S) \times \{ Y \} \right) \subset \QF(S).$$ Each
Bers slice is a holomorphically embedded copy of Teichm\"uller space
within $\X_\C(S)$.  While it follows that $\B_Y$ can be locally
described as the common zero locus of finitely many analytic functions
on $\X_\C(S)$, it is known that the Bers slice is \emph{not} a locally
algebraic set \cite{dumas-kent}---this is used to show that W.~Thurston's skinning map
is not a constant function \cite{dumas-kent}.

We prove a stronger result about the transcendence of $\B_Y$:

\begin{bigthm}
\label{thm:main}
Let $S$ be a closed surface of genus $g \geq 2$.  For any $Y \in
\T(S)$, the Bers slice $\B_Y$ is Zariski dense in $\X_{\C}(S)$.
\end{bigthm}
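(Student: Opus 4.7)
The plan is to realize $\B_Y$ as the image of an open domain of holomorphic quadratic differentials under the holonomy map of complex projective structures on $\overline{Y}$, and then to propagate any algebraic vanishing via the identity principle for holomorphic functions.

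Each holomorphic quadratic differential $q\in Q(\overline{Y})\cong\C^{3g-3}$ determines a $\CP^1$-structure on $\overline{Y}$ through the Schwarzian derivative equation, giving a holomorphic holonomy map $\hol\colon Q(\overline{Y})\to\X_\C(S)$ once an $\SL_2\C$-lift has been fixed. The Bers embedding $\beta_Y$ identifies $\T(S)$ with an open bounded subset $D\subset Q(\overline{Y})$, and by construction $\hol(D)=\B_Y$. If a regular function $f$ on $\X_\C(S)$ vanishes on $\B_Y$, then $\hol^{*}f$ is a holomorphic function on the connected complex manifold $Q(\overline{Y})\cong\C^{3g-3}$ that vanishes on the nonempty open set $D$; by the identity theorem, $\hol^{*}f\equiv 0$ on all of $Q(\overline{Y})$. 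Consequently $f$ vanishes on the entire image $W:=\hol(Q(\overline{Y}))\subseteq\X_\C(S)$, and the theorem reduces to the claim that $W$ is Zariski dense in $\X_\C(S)$.

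The main obstacle is this reduced Zariski-density statement. The image $W$ has complex dimension only $3g-3$, half that of $\X_\C(S)$, so Zariski density can hold only because the Schwarzian monodromy is genuinely transcendental: no polynomial relation among trace functions can hold at every monodromy of a $\CP^1$-structure on the single Riemann surface $\overline{Y}$. One natural route is via Thurston's grafting parameterization of $\CP^1$-structures: every $q\in Q(\overline{Y})$ corresponds uniquely to a grafting $\gr_\mu X$ of a Fuchsian structure on some $X\in\T(S)$ along a measured lamination $\mu\in\ML(X)$, and under this description $\hol(q)$ is the bending of the Fuchsian representation of $X$ along $\mu$. Varying $(X,\mu)$ subject to the constraint $\gr_\mu X=\overline{Y}$ should then furnish enough directions to preclude any polynomial constraint among traces. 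Alternatively, one may analyse the asymptotics of $\hol(tq_0)$ as $|t|\to\infty$ in $\C$ via the WKB expansion of the Schwarzian ODE and use the resulting transcendental growth rates of $\tr_\gamma(\hol(tq_0))$ to conclude that no non-zero polynomial can vanish on $W$.
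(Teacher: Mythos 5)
Your opening reduction is correct and is exactly the paper's first step: since the Bers embedding realizes $\B_Y$ as the biholomorphic image of an open subset of $Q(Y)$ under the proper holomorphic embedding $\hol$, the identity theorem shows that any regular function vanishing on $\B_Y$ vanishes on all of $\W_Y = \hol(Q(Y))$, so the theorem reduces to Zariski density of $\W_Y$. You also correctly identify the essential difficulty: $\W_Y$ has complex dimension $3g-3$, half that of $\X_\C(S)$, so density cannot follow from any dimension count on $\W_Y$ itself.

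The problem is that you then stop. Neither of your two suggested routes is carried out, and neither contains the mechanism that actually closes the argument. Saying that varying $(X,\mu)$ with $\gr_\mu X = Y$ ``should furnish enough directions to preclude any polynomial constraint'' is precisely the statement to be proved, restated; and WKB growth rates of individual traces along rays $tq_0$ do not by themselves rule out a polynomial relation holding identically on $\W_Y$. The paper's proof supplies the missing mechanism in four steps you do not mention: (1) Goldman's theorem, which identifies $\W_Y \cap \T(S)$ with the countable set $\{\gr_\lambda^{-1}(Y) : \lambda \in 2\pi\ML_\Z(S)\}$; (2) the antipodal limit theorem, which shows that as $\lambda_n \to \infty$ with $[\lambda_n] \to [\lambda]$, the points $\gr_{\lambda_n}^{-1}(Y)$ converge to $[i_Y(\lambda)]$ in Thurston's compactification, so that $\W_Y \cap \T(S)$ accumulates on \emph{all} of $\PML(S)$; (3) the identification of Thurston's compactification of $\T(S)$ with the logarithmic compactification coming from trace coordinates, so that the logarithmic limit set $V^\infty$ of the Zariski closure $V$ of $\W_Y$ contains $\PML(S)$ and hence $\cone(V^\infty)$ contains the $(6g-6)$-dimensional cone $\ML(S)$; and (4) Bergman's structure theorem, which bounds $\cone(V^\infty)$ by a finite union of subspaces of dimension $\dim_\C V$. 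Together these force $\dim_\C V \geq 6g-6$, and irreducibility of $\X_\C(S)$ finishes the proof. The crucial insight---that a half-dimensional analytic subvariety can be shown Zariski dense by exhibiting a full-dimensional set of directions in its logarithmic limit set, detected through the real points $\W_Y \cap \T(S)$---is absent from your proposal, so as written it has a genuine gap at its central step.
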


The proof is roughly as follows: The Zariski closure of the Bers slice
$\B_{Y}$ contains an analytic subvariety $\W_Y \subset \X_\C(S)$ of
complex dimension $-\frac{3}{2}\chi(S)$ which consists of holonomy
representations of projective structures on $Y$.  By analyzing its
real points, we show that $\W_Y$ accumulates on a
$(-3\chi(S)-1)$--dimensional subset of the logarithmic limit set of
$\X_\C(S)$, whereas a proper subvariety of $\X_\C(S)$ accumulates on a
set of strictly smaller dimension.  Thus $\W_Y$, and hence $\B_Y$, is
Zariski dense.

\boldpoint{Acknowledgments.} The authors thank Ian Agol, Alexander
Goncharov, and Peter Shalen for helpful conversations related to this work.

\section{Logarithmic Limit Sets}

For an affine algebraic variety $V \subset \C^n$, let $\Log |V|
\subset \R^n$ denote the \emph{amoeba} of $V$: the set of points $\{
(\log |z_1|, \ldots, \log |z_n|) \: | \: z \in V, z_i \neq 0\}$.

The \emph{logarithmic limit set} of $V$, denoted $V^\infty$, is the
set of limit points of $\Log |V|$ on the positive projectivization
$S^{n-1} = (\R^n \setminus \{0\}) / \R^+$.  Equivalently, considering
$S^{n-1}$ as the boundary of the unit ball $B^n$, we have
\begin{equation*}
\label{eqn:closure}
 V^\infty = \overline{f(\Log|V|)} \cap S^{n-1},
\end{equation*}
where $f : \R^n \to B^n$ is the rescaling map $ f(x) = \frac{x}{1 +
  \|x\|}$.

From this description, we see that $V^\infty$ is the boundary of a
  \emph{logarithmic compactification} of $V^* = V \cap (\C^*)^n$,
  which we denote by $\overline{V} = V^* \sqcup V^\infty$.
  Explicitly, a sequence $z^{(i)} \in V^*$ converges to the ray $[p] =
  \R^+\cdot p \in S^{n-1}$, where $p \in \R^n - \{0\}$, if there exists a
  sequence $c_i \in \R^+$ such that $c_i \to 0$ and 
\begin{equation}
\label{eqn:logarithmic}
 \lim_{i \to \infty} \left(c_i \log |z^{(i)}_1|, \ldots, c_i \log |z^{(i)}_n|
  \right) = (p_1, \ldots, p_n).
\end{equation}

Given a subset $E \subset S^{n-1}$, let $\cone(E) = (\R^+\cdot E) \cup
\{0\} \subset \R^n$ denote the \emph{cone} on $E$.  Note that $E =
\cone(E) \cap S^{n-1}$.  Some properties of the logarithmic limit set
$V^\infty$ are most easily expressed in terms of $\cone(V^\infty)$.
For example, G.~Bergman proved the fundamental structure theorem:
\begin{thm}[\cite{bergman}]
\label{thm:log-dimension}
  If $V \subset \C^n$ is an affine algebraic variety of complex
  dimension $d$, then $\cone(V^\infty)$ is contained in a finite union
  of $d$-dimensional subspaces of $\R^n$ defined over $\Q$.
\end{thm}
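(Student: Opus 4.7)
The plan is to translate the asymptotic condition defining $V^\infty$ into a statement about valuations on function fields, and then apply the classical bound on the rational rank of a valuation. Since logarithmic limit sets behave well under finite unions and dimensions of irreducible components are bounded by $d$, reduce at once to the case that $V$ is irreducible of dimension $d$. Given a ray $[p] \in V^\infty$ realized by $z^{(i)} \in V^*$ with scalings $c_i \to 0^+$ as in \eqref{eqn:logarithmic}, I build a valued field extension of $\C$ that sees $p$: fix a non-principal ultrafilter $\mathcal{U}$ on $\N$, form the ultrapower $\C^\N / \mathcal{U}$, and extract from it a field extension $K \supset \C$ equipped with an $\R$-valued valuation $v$ such that $w := ([z^{(i)}_1], \ldots, [z^{(i)}_n])$ defines a $K$-point of $V$ with valuation vector $(v(w_j))_j = -p$. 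The ultraproduct transfer principle ensures $w \in V(K)$, because $V$ is cut out by finitely many polynomials over $\C \subset K$.

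Once $p$ is realized as a valuation vector, I invoke Abhyankar's inequality: for any valuation on a finitely generated field extension $L/\C$, the rational rank of the value group is bounded by the transcendence degree of $L/\C$. Applying this to $L = \C(w_1, \ldots, w_n)$, whose transcendence degree over $\C$ is at most $\dim V = d$, I conclude that the $\Q$-span of $\{v(w_1), \ldots, v(w_n)\}$ has dimension at most $d$. Hence $p$ lies in a $\Q$-rational subspace of $\R^n$ of dimension $\leq d$, and the same holds for every element of $\cone(V^\infty)$.

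To obtain finiteness of the covering collection of subspaces I would use the fact that the weight vector $p$ determines an initial ideal $\mathrm{in}_p\, I(V)$, and two weight vectors producing the same initial ideal span a common $\Q$-rational cone. The Gr\"obner fan of $I(V)$ has only finitely many maximal cones; each is $\Q$-rational of dimension at most $d$, and enlarging each to a $d$-dimensional $\Q$-subspace produces the required finite list. The main obstacle is the first step: cleanly producing the valuation $v$ with prescribed valuation vector $p$ from the ultrapower, since one must verify that the naive formula $v([a_i]) = \lim_{\mathcal{U}} c_i \log|a_i|$ descends to an honest $\R$-valued valuation on an appropriate subfield containing $\C$ and the coordinates $w_j$. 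Once this is arranged, both the dimension bound (via Abhyankar) and the finiteness (via the Gr\"obner fan) are standard consequences.
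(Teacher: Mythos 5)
First, a point of comparison: the paper does not prove this statement at all --- it is Bergman's theorem, quoted from \cite{bergman} and used as a black box --- so your attempt has to be judged against the literature rather than against an internal argument. The first half of your sketch is sound and close in spirit to the valuation-theoretic arguments of Bergman and Bieri--Groves. The step you flag as the main obstacle does go through: setting $v([a_i]) = -\lim_{\mathcal{U}} c_i \log|a_i|$, the subring $\mathcal{O}$ of classes with $v > -\infty$ contains $\C$ and the coordinates $w_j$, the set $\mathfrak{m}_\infty = \{v = +\infty\}$ is a prime ideal (primeness uses $v(ab)=v(a)+v(b)$), and $v$ descends to an honest $\R$-valued valuation, trivial on $\C$, on the fraction field of $\mathcal{O}/\mathfrak{m}_\infty$; the ultrametric inequality survives the limit because $c_i \log 2 \to 0$. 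Together with \L{}o\'s's theorem and Abhyankar's inequality this correctly shows that each individual point $p \in \cone(V^\infty)$ satisfies $\dim_\Q \mathrm{span}_\Q\{p_1,\dots,p_n\} \leq d$ and hence lies in \emph{some} rational subspace of dimension at most $d$.

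The genuine gap is in the finiteness step. The maximal cones of the Gr\"obner fan of $I(V)$ are $n$-dimensional, not $d$-dimensional: they correspond to generic weight vectors, whose initial ideals are monomial. So the assertion that ``each is $\Q$-rational of dimension at most $d$'' is false, and one cannot enlarge an $n$-dimensional cone to a $d$-dimensional subspace. To make this step work you need two further facts: (i) if $p \in \cone(V^\infty)$ then $\mathrm{in}_p I(V)$ contains no monomial, which requires a quantitative estimate showing that a polynomial with a strictly $p$-dominant term cannot vanish along the sequence $z^{(i)}$; and (ii) every Gr\"obner cone whose relative interior consists of monomial-free weight vectors has dimension at most $d$. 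Item (ii) does not follow from your per-point bound applied to $p$ alone, since $p$ may be a non-generic point of its cone with $\Q$-span of dimension smaller than the cone's; to run Abhyankar at a generic point $q$ of the cone you must first produce a valued point of $V$ with valuation vector $-q$, and your ultrapower construction only manufactures such points for weights already realized by sequences in $V^*$. That lifting statement is the substantive content of the Bieri--Groves/Kapranov theorem. As written, then, your argument proves the per-point dimension bound but not the covering of $\cone(V^\infty)$ by \emph{finitely many} $d$-dimensional rational subspaces; you would need either to prove the lifting lemma or to substitute Bergman's original compactness argument for the finiteness.
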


Equivalently, the logarithmic limit set $V^\infty$ is contained in a
finite union of \emph{rational great $(d-1)$-spheres}.

Strengthening the theorem above, Bieri and Groves \cite{bieri-groves}
showed that $\cone(V^\infty)$ is a finite union of rational polyhedral
convex cones---sets of the form $\{ \sum_{i=1}^k c_i x_i \: | \: c_i
\geq 0\}$ where $x_i \in \Q^n$---and that at least one of these cones
spans a subspace of dimension $d$.  However, for
our purposes, the dimension estimate of Theorem
\ref{thm:log-dimension} will suffice.

We extend the definition of logarithmic limit sets to apply to subsets
of algebraic varieties: for $T \subset V$, we denote by $T^\infty$ the
boundary of $T \cap V^*$ in $\overline{V}$.  In our cases of interest,
$T$ will be a noncompact and properly embedded submanifold of the
smooth points of $V^*$, and in particular $T^\infty$ will be nonempty.

\section{Character varieties}

Let $\X_\C(S) = \Hom(\pi_1(S), \SL_2\C) \sslash \SL_2\C$ denote the
$\SL_2\C$ character variety of the compact surface $S$, which is an
irreducible affine variety of dimension $6g-6$ (see
\cite{culler-shalen} \cite{goldman:topological-components}).  For a suitable finite
subset $\{ \gamma_1, \ldots, \gamma_N\}$ of $\pi_1(S)$, the variety
$\X_\C(S)$ can be realized as the image of $\Hom(\pi_1(S),\SL_2\C)$ by
the trace map
$$ \rho \mapsto \left( \tr \rho(\gamma_1), \ldots, \tr \rho(\gamma_N)
\right ).$$ We call such $\{\gamma_i\}$ an \emph{embedding set for
  $\X_\C(S)$}.  For example, given a generating set for $\pi_1(S)$
with $n$ elements, the set of words of length at most $n$ in these
generators is an embedding set for $\X_\C(S)$ \cite[\S1.4]{culler-shalen}.
The traces of the images of an embedding set
determine the traces of all elements of $\rho(\pi_1(S))$---that is,
they determine the \emph{character} of $\rho$.

We will only consider the characters of nonelementary representations
$\rho : \pi_1(S) \to \SL_2\C$, which are in one-to-one correspondence
with conjugacy classes of such representations.  When convenient, we
blur the distinction between a character and representatives of the
associated conjugacy class of representations.

Each marked hyperbolic structure on $S$ has a corresponding Fuchsian
representation $\rho : \pi_1(S) \to \SL_2\R$ (one of the finitely many
lifts of $\rho_0 : \pi_1(S) \to \PSL_2\R \simeq \isom^+(\H^2)$).  In
this way, the Teichm\"uller space $\T(S)$ can be identified with a
connected component of the set of real points $\X_\R(S) \subset
\X_\C(S)$ \cite{goldman:topological-components}.  Similarly, the
quasi-Fuchsian space $\QF(S)$ of equivariant quasiconformal
deformations of Fuchsian representations is identified with an open
neighborhood of $\T(S)$ in $\X_\C(S)$ \cite{marden} \cite{sullivan}.

\section{Measured laminations and Thurston's compactification}
Let $\ML(S)$ denote the space of measured geodesic laminations on $S$,
which is a piecewise--linear manifold homeomorphic to $\R^{6g-6}$.
Let $\scc$ denote the set of isotopy classes of homotopically
nontrivial simple closed curves on $S$, and note that each element of
$\scc$ corresponds to a conjugacy class $[\gamma] \subset \pi_1(S)$.
Then $\R^+ \times \scc$ is naturally a dense subset of $\ML(S)$
consisting of weighted simple closed geodesics.

For a suitable finite subset $\{ \gamma_1, \ldots, \gamma_N \}$ of
$\pi_1(S)$, we have a piecewise--linear embedding $\ML(S) \to \R^{N}$,
$$ \lambda \mapsto \left ( i(\lambda, \gamma_1), \ldots, i(\lambda,
  \gamma_N) \right )$$ where $i(\lambda, \gamma)$ is the
\emph{intersection number}, the minimum mass assigned to a
representative of the isotopy class of $\gamma$ by the transverse
measure of $\lambda \in \ML(S)$.   (The intersection number for
simple curves is
discussed in \cite[Ch.~5]{flp}; for closed but possibly
self-intersecting curves, see \cite{bonahon:currents}.)  We call such
$\{\gamma_1, \ldots \gamma_N\} \subset \pi_1(S)$ an \emph{embedding
  set for $\ML(S)$}.  An example of an embedding set with $9g-9$
elements is described in \cite[\S6.4]{flp}.  The image of
such an embedding is a piecewise--linear cone, and the space $\PML(S)$
of rays in $\ML(S)$ can be identified with $\ML(S) \cap S^{N-1}$.

Thurston's compactification of Teichm\"uller space adjoins $\PML(S)$
as the boundary of $\T(S)$ according to the asymptotic behavior of
hyperbolic lengths of geodesics.  Specifically, a sequence $X_n \in
\T(S)$ converges to $[\lambda] = \R^+\lambda \in \PML(S)$ if and only
if there is a sequence $c_n \in \R^+$ such that $c_n \to 0$ and
\begin{equation}
\label{eqn:thurston}
 \lim_{n \to \infty} c_n \ell(\alpha,X_n) \to i(\lambda,\alpha)
\end{equation}
for all closed curves $\alpha$.  Thurston showed that the same
compactification is obtained if \eqref{eqn:thurston} is required only
for the finitely many $\alpha$ in an embedding family for
$\ML(S)$.  For details, see \cite[Ch.~8]{flp} \cite[Thm.~18]{bonahon:currents}.

\section{Logarithmic limit sets and Thurston's compactification}

We fix throughout a finite set $\{ \gamma_1, \ldots, \gamma_N \}
\subset \pi_1(S)$ that is both an embedding set for $\X_\C(S)$ and for
$\ML(S)$ (e.g.~the union of the example embedding sets described
above).  We use this family to identify $\X_\C(S)$ and $\ML(S)$ with
their embedded images in $\C^N$ and $\R^N$, respectively.  Similarly,
we regard $\PML(S)$ as a subset of $S^{N-1}$.

With this embedding of $\X_\C(S)$, we can consider the logarithmic
limit sets $\X_\C(S)^\infty$ and $\T(S)^\infty$ in $S^{N-1}$.

\begin{lem}\label{lem:thurston-as-log}
We have $\T(S)^\infty = \PML(S)$, and the logarithmic compactification
of $\T(S)$ is identical to the Thurston compactification.
\end{lem}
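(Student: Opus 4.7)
The plan is to deduce the lemma directly from the classical Fuchsian trace-length identity
$$ |\tr \rho_X(\gamma)| = 2\cosh(\ell(\gamma,X)/2), $$
valid for any lift $\rho_X : \pi_1(S) \to \SL_2\R$ of a point $X \in \T(S)$ and any $\gamma \in \pi_1(S)$, as this relates the two defining rescalings \eqref{eqn:logarithmic} and \eqref{eqn:thurston} coordinate by coordinate. First I would observe that, since every nontrivial element of a Fuchsian representation of a closed surface of genus $\geq 2$ is hyperbolic, the trace satisfies $|\tr \rho_X(\gamma)| \geq 2$, so $\T(S)$ lies inside $\X_\C(S) \cap (\C^*)^N$ and its amoeba is defined. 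Expanding $\log(2\cosh(t/2)) = t/2 + \log(1+e^{-t})$ and using $0 \leq \log(1+e^{-t}) \leq \log 2$ for $t \geq 0$ yields the uniform estimate
$$ \left| \log |\tr \rho_X(\gamma)| - \tfrac{1}{2}\ell(\gamma, X) \right| \leq \log 2 $$
for all $X \in \T(S)$ and all $\gamma \in \pi_1(S)$.

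Next I would use this bound to match the compactifications sequentially. Given $X_n \in \T(S)$ with $X_n \to [\lambda] \in \PML(S)$ in Thurston's sense and rescaling $c_n \to 0$, multiplying the estimate by $c_n$ kills the additive error, so
$$ c_n \log |\tr \rho_{X_n}(\gamma_i)| \to \tfrac{1}{2}\, i(\lambda,\gamma_i), \qquad i = 1, \ldots, N. $$
The limit vector is nonzero because $\{\gamma_1,\ldots,\gamma_N\}$ is an embedding set for $\ML(S)$, so by \eqref{eqn:logarithmic} the image of $X_n$ in $\C^N$ converges in the logarithmic compactification to the projective ray of $\lambda$. This yields $\PML(S) \subset \T(S)^\infty$. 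For the reverse inclusion, suppose $X_n$ converges logarithmically to some $[p] \neq 0$ in $S^{N-1}$. Any coordinate $p_i \neq 0$ forces $\log |\tr \rho_{X_n}(\gamma_i)|$ to diverge, hence $\ell(\gamma_i,X_n) \to \infty$, so $X_n$ leaves every compact subset of $\T(S)$. Compactness of the Thurston boundary lets me extract a subsequence with Thurston limit $[\lambda] \in \PML(S)$, and the forward direction already established identifies $[p]$ with the ray of $\lambda$.

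I do not expect a serious obstacle: the uniform trace-length estimate does all the work. The one point worth flagging is that we must use the same set $\{\gamma_i\}$ as an embedding set both for $\X_\C(S)$ and for $\ML(S)$, as arranged at the start of Section 4, and that the two embeddings of $\PML(S)$ into $S^{N-1}$---via intersection numbers on the one hand, and as the boundary of $\T(S)$ in the logarithmic compactification on the other---differ only by the global scalar factor $\tfrac{1}{2}$, which is invisible under projectivization. Combining the two inclusions with the sequential descriptions of both compactifications (which are metrizable) then yields $\T(S)^\infty = \PML(S)$ and identifies the logarithmic and Thurston compactifications of $\T(S)$.
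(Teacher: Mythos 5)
Your proposal is correct and follows essentially the same route as the paper: both rest on the identity $|\tr \rho_X(\gamma)| = 2\cosh(\ell(\gamma,X)/2)$ and the observation that the resulting additive discrepancy between $\log|\tr\rho_X(\gamma)|$ and $\tfrac12\ell(\gamma,X)$ is killed by the rescaling $c_n \to 0$, applied to the common embedding set $\{\gamma_1,\dots,\gamma_N\}$. Your version merely makes the paper's $o(1)$ estimate uniform and spells out the two inclusions (including the compactness argument for the reverse one) that the paper leaves implicit.
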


\begin{proof}
If $\rho_X \in \Hom(\pi_1(S),\SL_2\R)$ represents $X \in \T(S)$, then its
character is real and satisfies $|\tr \rho(\gamma)| \geq 2$ for all
$\gamma$, with equality only when $\gamma = 1$.  The hyperbolic length and
trace of $\alpha \in \scc$ are related by
$$ \ell(\alpha, X) = 2 \arccosh \frac{|\tr \rho(\alpha)|}{2}.$$ 
Since $\arccosh \frac{|t|}{2} = \log |t| + o(1)$ as
$t \to \infty$, then for any $X_i \in \T(S)$ and $c_i \to 0$,
the sequences $c_i \arccosh \frac{| \tr \rho_{X_i}(\gamma) |}{2}$ and
$c_i \ell(\gamma,X_i)$ have the same limit, and one converges if and
only if the other does.  Applying this to the $\gamma_1, \ldots, \gamma_N$
demonstrates the equivalence of convergence in the Thurston
compactification \eqref{eqn:thurston} and convergence in the
logarithmic one \eqref{eqn:logarithmic}.
\end{proof}

\begin{remark}
  The lemma above is essentially the same as Theorem III.3.2 of
  \cite{morgan-shalen}, which identifies the Morgan--Shalen and
  Thurston compactifications of Teichm\"uller space.  The definition
  of the Morgan--Shalen compactification is similar to that of the
  logarithmic compactification, except that it is defined using the
  function $\log (2 + |\tr \rho(\param) |)$ rather than $\log | \tr
  \rho(\param)|$; and by considering the limiting behavior of the
  traces of \emph{all} elements of $\pi_1(S)$, rather than a finite
  subset.  The difference in the logarithmic scaling function does not
  affect projective limits, and since finitely many intersection
  number functions embed $\ML(S)$ into $\R^n$, Lemma
  \ref{lem:thurston-as-log} follows.  Morgan and Shalen note that
  their Theorem III.3.2 is, in turn, equivalent to results of
  Thurston.  The connection between Thurston's compactification and
  logarithmic limit sets is explained in detail in recent papers of
  Alessandrini (\cite{alessandrini} and \cite[\S6]{alessandrini2}), to
  which we refer the reader for a complete discussion.
\end{remark}

\section{Quadratic differentials and holonomy}
\label{sec:qdhol}

For any $Y \in \T(S)$, let $Q(Y) \simeq \C^{3g-3}$ denote the vector
space of holomorphic quadratic differentials on the Riemann surface
$Y$.  We identify this vector space with the set of \emph{complex
  projective structures} on $Y$: conformal atlases whose transition
functions are M\"obius maps.  Under this identification, the zero
quadratic differential corresponds to the Fuchsian uniformization
$\Tilde{Y} \simeq \H$, and, more generally, $\phi \in Q(Y)$
corresponds to a projective structure whose developing map $f_\phi :
\H \to \CP^1$ has Schwarzian derivative $S(f_\phi) = \phi$.

The \emph{holonomy map} is the holomorphic map
$$ \hol: Q(Y) \to \X_\C(S)$$ which sends a quadratic differential
$\phi$ to the conjugacy class of the holonomy representation of the
associated projective structure on $Y$.  Here we implicitly lift these
holonomy representations from $\PSL_2\C$ to $\SL_2\C$, which requires
the choice of a spin structure on $Y$, or equivalently, a choice of
cohomology class in $H^1(Y, \pi_1(\PSL_2\C))$.  The particular choice among
the finite set of spin structures will not concern us.

The map $\hol$ is a proper holomorphic embedding, so its image
$$ \W_Y = \hol(Q(Y)) \subset \X_\C(S)$$ is an analytic subvariety
\cite[\S11.4]{gkm} (also see \cite[\S5.7]{dumas:survey} for the
history of this theorem).  We also have $\B_Y \subset \W_Y$, and in fact, an
open set $B \subset Q(Y)$ (the \emph{Bers embedding of $\T(Y)$}) maps
biholomorphically to $\B_Y$ under $\hol$.  This leads to a key
observation used in the proof of Theorem \ref{thm:main}:
\begin{lem}[{\cite{dumas-kent}}]
Any analytic subvariety of $\X_\C(S)$ containing $\B_Y$ also contains
$\W_Y$.  In particular, the Zariski closure of $\B_Y$ contains $\W_Y$.
\end{lem}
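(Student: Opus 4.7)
The plan is to pull the hypothetical subvariety back through $\hol$ and invoke the identity principle for analytic sets. Suppose $V \subset \X_\C(S)$ is an analytic subvariety containing $\B_Y$. Since $\hol$ is holomorphic, the preimage $\hol^{-1}(V)$ is an analytic subset of $Q(Y) \simeq \C^{3g-3}$. Because $\hol$ carries the Bers embedding $B \subset Q(Y)$ biholomorphically onto $\B_Y$ and $\B_Y \subset V$, we have $B \subset \hol^{-1}(V)$.

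Now $B$ is a nonempty open subset of the connected complex manifold $Q(Y) \simeq \C^{3g-3}$. Any analytic subset of such a manifold that contains a nonempty open set must be the whole manifold: near each point of $B$ the local defining holomorphic functions of $\hol^{-1}(V)$ vanish on an open set and therefore vanish identically by the identity principle, and these local identities propagate across the connected ambient manifold. Hence $\hol^{-1}(V) = Q(Y)$, which is to say $\W_Y = \hol(Q(Y)) \subset V$. This proves the first assertion.

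The final sentence of the lemma follows at once, since the Zariski closure of $\B_Y$ in $\X_\C(S)$ is algebraic, hence analytic, and contains $\B_Y$. I do not foresee a substantive obstacle here: the argument reduces to the standard fact that an analytic subset of a connected complex manifold cannot contain a nonempty open subset without being the whole manifold, together with the fact that preimages of analytic sets under holomorphic maps are analytic. The only care needed is to interpret \emph{analytic subvariety} in the usual sense of a subset that is locally cut out by finitely many holomorphic equations.
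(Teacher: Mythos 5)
Your argument is correct and is essentially the proof given in the cited reference \cite{dumas-kent}: pull the subvariety back through the proper holomorphic embedding $\hol$, observe that the resulting analytic subset of $Q(Y)\simeq\C^{3g-3}$ contains the nonempty open Bers embedding $B$, and conclude by the identity principle that it is all of $Q(Y)$. The present paper states the lemma without proof, but the ingredients it records in \S\ref{sec:qdhol} are exactly the ones you use, so there is nothing to add.
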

Thus to prove the main theorem, it suffices to show that $\W_Y$ is
Zariski dense in $\X_\C(S)$.

\section{Grafting}

We now describe certain points on $\W_Y$ in terms of grafting.

Equip $X \in \T(S)$ with its hyperbolic metric, and let $\gamma$ be a
simple closed geodesic on $X$.  Removing $\gamma$ from $X$ and
replacing it with a Euclidean cylinder $\gamma \times [0,t]$, we
obtain a new surface with a well-defined conformal structure.  This is
the \emph{grafting of $X$ by $t \gamma$}, denoted $\gr_{t \gamma}X$.
By adjoining multiple cylinders, grafting extends naturally to
measured laminations $\lambda = \sum_i c_i \gamma_i$ supported on
unions of disjoint simple closed geodesics.  The case when $c_i \in 2 \pi
\Z$ will be of particular interest to us, and we let $2 \pi \ML_\Z(S)$
be the set of all such $2\pi$-integral measured laminations.

\begin{thm}[{Tanigawa, \cite{tanigawa}}]
For each $\lambda \in 2 \pi \ML_\Z(S)$, the map $\gr_\lambda : \T(S)
\to \T(S)$ is a diffeomorphism.
\end{thm}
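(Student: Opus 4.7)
The plan is to prove $\gr_\lambda$ is both proper and a local diffeomorphism; since $\T(S) \simeq \R^{6g-6}$ is simply connected, any proper local diffeomorphism of $\T(S)$ to itself is a covering, hence a global diffeomorphism.

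The key feature of $2\pi$-integer grafting is that $\gr_\lambda X$ carries a canonical complex projective structure $P_\lambda(X)$ whose holonomy equals the Fuchsian representation of $X$: each inserted Euclidean cylinder of height $2\pi n$ admits a projective uniformization as an $n$-fold annular neighborhood of a fixed point of the Fuchsian hyperbolic element for the corresponding geodesic, and the total $2\pi n$-rotation is trivial in $\PSL_2\R$, so the holonomy is unaffected. This produces a smooth lift $P_\lambda : \T(S) \to P(S)$ of the Fuchsian embedding $\iota : \T(S) \hookrightarrow \X_\C(S)$ through the holonomy map $\hol$ on the total space $P(S)$ of complex projective structures; in particular $\gr_\lambda = p \circ P_\lambda$, where $p : P(S) \to \T(S)$ forgets the projective structure, and $\hol \circ P_\lambda = \iota$.

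For properness, I would use a length-comparison argument: the hyperbolic thick parts of $X$ and $\gr_\lambda X$ are comparable outside the inserted cylinders, so any divergent sequence $X_n \to \infty$ in $\T(S)$ (witnessed by a curve of divergent hyperbolic length, say) forces $\gr_\lambda X_n \to \infty$. For local injectivity of $d\gr_\lambda$ at $X$, suppose $v \in T_X \T(S)$ satisfies $d\gr_\lambda(v) = 0$. The factorization $\gr_\lambda = p \circ P_\lambda$ forces $dP_\lambda(v)$ to be vertical in $P(S)$, tangent to the fibre $Q(\gr_\lambda X)$ of $p$. By Hejhal's theorem that $\hol$ is a local biholomorphism on $P(S)$, this is equivalent to $d\iota(v) \in T_{\iota(X)} \W_{\gr_\lambda X}$, so local injectivity reduces to the transversality statement $T_{\iota(X)} \T(S) \cap T_{\iota(X)} \W_{\gr_\lambda X} = \{0\}$ inside $T_{\iota(X)} \X_\C(S)$.

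The main obstacle is establishing this transversality. One knows a priori that $\T(S) \cap \W_Y$ is discrete in $\X_\C(S)$ for every $Y \in \T(S)$, since by Goldman's theorem its points are precisely the Fuchsian projective structures on $Y$, classified by $2\pi \ML_\Z(S) \cup \{0\}$. Upgrading discreteness of the intersection to linear transversality of tangent spaces is delicate because $\T(S)$ is only real analytic and the usual implicit function theorem argument in the holomorphic category does not directly apply; one must analyze the real-analytic fibre $\hol^{-1}(\T(S)) \subset Q(Y)$ at each isolated point and verify that the pullbacks of local defining equations of $\T(S)$ vanish to exactly first order. Should this local-analytic step prove too delicate, I would fall back on Tanigawa's original harmonic-map proof, in which non-degeneracy of $d\gr_\lambda$ (and coercivity yielding properness) are extracted directly from the positive-definiteness of the Hessian of the Dirichlet energy of the harmonic map $\gr_\lambda X \to X$ as a function of $X$.
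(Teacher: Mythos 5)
This theorem is not proved in the paper at all: it is imported verbatim from Tanigawa's article, so there is no in-paper argument to compare against. Judged on its own terms, your proposal has the right skeleton --- properness plus local diffeomorphism plus the covering-space argument over the contractible space $\T(S)$ is indeed how such statements are established, and your reformulation of local injectivity is correct: the $2\pi$-integral grafting annuli develop as $n$-fold windings around the axis of the corresponding hyperbolic element, so the holonomy of the grafted projective structure is the Fuchsian representation of $X$, giving the section $P_\lambda$ with $\gr_\lambda = p \circ P_\lambda$ and $\hol \circ P_\lambda = \iota$, and Hejhal's local biholomorphism theorem then converts injectivity of $d\gr_\lambda$ into the statement $T_{\iota(X)}\T(S) \cap T_{\iota(X)}\W_{\gr_\lambda X} = \{0\}$.

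The problem is that both substantive inputs are left unproved, and they are precisely where all of the work in Tanigawa's paper lies. For properness, the heuristic that ``the hyperbolic thick parts of $X$ and $\gr_\lambda X$ are comparable outside the inserted cylinders'' is not an argument: the hyperbolic metric of $\gr_\lambda X$ is not obtained by leaving the old metric alone and appending flat cylinders (that is the Thurston metric, which is only conformally equivalent to the hyperbolic one), and a sequence $X_n$ can diverge in ways --- e.g.\ lengths blowing up along curves crossing the support of $\lambda$ --- for which no soft comparison of thick parts shows that $\gr_\lambda X_n$ also diverges. Tanigawa obtains properness from energy estimates for the harmonic maps $\gr_\lambda X \to X$. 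For local injectivity, you correctly note that discreteness of $\T(S) \cap \W_Y$ (a totally real submanifold meeting a complex one) does not imply the tangent-space transversality you need --- an isolated intersection point of a totally real plane and a complex curve can easily be non-transverse --- but you then stop, and your stated fallback is to invoke ``Tanigawa's original harmonic-map proof,'' which amounts to citing the theorem you set out to prove. As it stands the proposal is a correct reduction of the theorem to two open claims, not a proof; to complete it you would need either the Dirichlet-energy convexity argument you allude to at the end, or an independent proof of the transversality of the Fuchsian locus with the holonomy image of each fibre $Q(Y)$.
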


\begin{remark}
Scannell and Wolf have shown that the same result holds for all
$\lambda \in \ML(S)$ \cite{scannell-wolf}.
\end{remark}

So we have an inverse map $\gr_\lambda^{-1} : \T(S) \to \T(S)$.
Goldman showed that grafting can be used to describe the intersection
$\W_Y \cap \T(S)$ explicitly:

\begin{thm}[{Goldman, \cite{goldman}}]
\label{thm:goldman}
For each $Y$ we have $$\W_Y \cap \T(S) = \{ \gr_\lambda^{-1}(Y) \: |
\: \lambda \in 2 \pi \ML_\Z(S) \}.$$
\end{thm}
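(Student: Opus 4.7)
The plan is to verify the two inclusions separately.

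For the easy direction $\supseteq$, given $X \in \T(S)$ and a $2\pi$-integral lamination $\lambda = \sum 2\pi n_i \gamma_i \in 2\pi\ML_\Z(S)$, I would construct explicitly a $\CP^1$-structure on the grafted surface $\gr_\lambda X$ whose $\PSL_2\C$-holonomy is the Fuchsian representation $\rho_X$ of $X$. On the hyperbolic part of $\gr_\lambda X$ (the complement of the grafted annuli) I use the projective structure inherited from the Fuchsian uniformization $\Tilde{X} \cong \H \subset \CP^1$. On each inserted Euclidean annulus with core $\gamma_i$ and width $2\pi n_i$, I use the projective structure pulled back from a tubular neighborhood of the axis of $\rho_X(\gamma_i)$ in $\CP^1$, traversed $n_i$ times. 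Because the widths are integer multiples of $2\pi$, these local pieces assemble into a global $\CP^1$-structure, and the holonomy around every loop equals $\rho_X$ (using the appropriate spin structure to lift to $\SL_2\C$). Taking $Y = \gr_\lambda X$ exhibits a $\phi \in Q(Y)$ with $\hol(\phi) = [\rho_X]$, so $\gr_\lambda^{-1}(Y) = X$ lies in $\W_Y \cap \T(S)$.

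For the harder inclusion $\subseteq$, I would start with a Fuchsian character $[\rho] \in \W_Y \cap \T(S)$ realized as $\hol(\phi)$ for some $\phi \in Q(Y)$, and reconstruct the pair $(X,\lambda)$ from the projective structure. The representation $\rho$ preserves an oriented round disk $\Delta \subset \CP^1$ bounded by a circle $C$, and the developing map $f : \Tilde{Y} \to \CP^1$ is a $\rho$-equivariant local biholomorphism that is generally not injective. Following Goldman, the strategy is to build a $\rho$-equivariant continuous retraction $\kappa : \Tilde{Y} \to \Delta$ whose image is a pleated plane in $\H^2 \cong \Delta$. On the open set $f^{-1}(\Delta)$, the map $\kappa$ is the natural identification. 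On each connected component $U$ of $f^{-1}(\CP^1 \setminus \overline{\Delta})$, one shows that $f|_U$ is a finite cyclic branched cover of a crescent bounded by an arc $\alpha \subset C$ and a circular arc; then $\kappa$ collapses $U$ onto the hyperbolic geodesic in $\Delta$ with the same endpoints as $\alpha$. The resulting pleated plane has bending lamination $\Tilde{\lambda}$ with weight $2\pi k$ at each bending geodesic, where $k$ is the branching degree of the associated component. Passing to the quotient yields $X = \Delta / \rho(\pi_1(S)) \in \T(S)$ and $\lambda \in 2\pi\ML_\Z(S)$ with $\gr_\lambda X = Y$, obtained by reversing the collapse.

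The main obstacle will be establishing the crescent-with-cyclic-branched-cover structure of components of $f^{-1}(\CP^1 \setminus \overline{\Delta})$. One must show each such component is a topological disk on which $f$ is a finite branched cover of definite shape, and that the $\rho$-action permutes these components in a way compatible with a geodesic lamination on $\Delta/\rho(\pi_1(S))$. The $2\pi$-integrality of the weights emerges precisely because the boundary of each crescent in $\CP^1$ traverses its arc of $C$ an integer number of times while the boundary of $U$ in $\Tilde{Y}$ is a single simple curve. The discreteness of $\rho$ and the properness of projective charts conspire to force the finite cyclic symmetry on each $U$; once this is in hand, the identification $Y = \gr_\lambda X$ reduces to unpacking the definitions of grafting and of the developing map.
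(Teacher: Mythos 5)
The paper does not prove this statement---it is quoted from Goldman \cite{goldman}, with Tanigawa's theorem supplying the inverse $\gr_\lambda^{-1}$---so there is no internal argument to compare you against; what follows measures your sketch against Goldman's actual proof. Your $\supseteq$ direction is essentially correct and standard: set $X=\gr_\lambda^{-1}(Y)$, keep the Fuchsian charts on the hyperbolic part, and develop each inserted cylinder onto $\CP^1$ minus the two fixed points of $\rho_X(\gamma_i)$, winding $n_i$ times around the axis; the $2\pi$-integrality is exactly what makes the charts agree along the cut geodesics, and the holonomy is visibly $\rho_X$.

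The $\subseteq$ direction is the entire content of Goldman's theorem, and your key structural claim there is wrong. The developing map is an immersion (a local biholomorphism, as you yourself note), so no component $U$ of $f^{-1}(\CP^1\setminus\overline{\Delta})$ can be a \emph{branched} cover of anything; and these components are not covers of crescents. Already in the model case $\gr_{2\pi n\gamma}$ one computes that the preimage of the complementary disk $\Delta^-=\CP^1\setminus\overline{\Delta}$ inside a lift of the grafting cylinder consists of $n$ disjoint open disks, each mapping \emph{homeomorphically onto all of} $\Delta^-$, with cyclic stabilizer $\langle\gamma\rangle$; each descends to an open annulus $\Delta^-/\langle\rho(\gamma)\rangle$ in the quotient surface. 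The substance of Goldman's proof is to establish this picture for an arbitrary projective structure with Fuchsian holonomy: that $f^{-1}(C)$ descends to a finite essential multicurve (in particular that no component bounds a disk), that each component of $f^{-1}(\Delta^-)$ maps bijectively onto $\Delta^-$, and that collapsing these regions recovers the uniformizing structure on $\Delta/\rho(\pi_1(S))$. The $2\pi$-integrality then comes from the fact that the two boundary geodesics of each wrapping region develop onto the \emph{same} geodesic of $\Delta$, forcing the total turning about the axis to be a multiple of $2\pi$---not from a crescent boundary traversing an arc of $C$ an integer number of times. You also leave unaddressed why the resulting locus is a disjoint union of simple closed curves rather than a more general lamination, and why the marked Riemann surface underlying the reassembled structure is exactly $Y=\gr_\lambda X$. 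These are precisely the points where the theorem could fail, so the proposal as written is not a proof; for the purposes of this paper the right move is the one the authors make, namely to cite \cite{goldman} (together with \cite{tanigawa} for surjectivity of $\gr_\lambda$).
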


The map $\lambda \mapsto \gr_\lambda^{-1}(Y)$ is injective (see
\cite{dumas-wolf}), establishing a bijection between $\W_Y \cap \T(S)$
and $\ML_\Z(S)$.  In particular, the set $\W_Y \cap \T(S)$ is
infinite, which is used in \cite{dumas-kent} to show that $\W_Y$ is
not an algebraic variety.

\section{Antipodal limits}

Each quadratic differential $\phi \in Q(Y)$ defines a pair of
orthogonal singular foliations of the Riemann surface $Y$, the
\emph{horizontal} and \emph{vertical} foliations, whose leaves
integrate the distributions $\{ v \in TY \:|\: \phi(v) \geq 0 \}$ and $\{
v \in TY \:|\: \phi(v) \leq 0\}$, respectively.  Straightening the leaves of the
horizontal foliation with respect to the hyperbolic metric yields a
measured lamination $\Lambda_Y(\phi) \in \ML(S)$ (see \cite{levitt}),
and Hubbard and Masur showed that $\Lambda_Y : Q(Y) \to \ML(S)$ is a
homeomorphism \cite{hubbard-masur}.  Similarly, the map $\phi \mapsto
\Lambda_Y(-\phi)$ corresponds to straightening the vertical foliation,
and is also a homeomorphism.

Define the \emph{$Y$-antipodal map} $i_Y : \ML(S) \to \ML(S)$ by
$$ i_Y(\lambda) = \Lambda_Y( - \Lambda_Y^{-1}(\lambda)).$$
Thus $i_Y$ is an involutive homeomorphism that exchanges the
laminations corresponding to vertical and horizontal foliations of
quadratic differentials on $Y$.  It is easy to see that for all $c>0$,
we have $i_Y(c \lambda) = c i_Y(\lambda)$, and thus $i_Y$ descends to
a homeomorphism $\PML(S) \to \PML(S)$, which we also call $i_Y$.

We need the following result from \cite{dumas:antipodal}.

\begin{thm}
\label{thm:antipodal}
Let $\lambda_n \to \infty$ be a divergent sequence in $\ML(S)$ such
that the rays $[\lambda_n] \in \PML(S)$ converge to $[\lambda]$.  Then
$$\lim_{n \to \infty} \gr_{\lambda_n}^{-1}(Y) = [ i_Y(\lambda) ]$$
in the Thurston compactification.
\end{thm}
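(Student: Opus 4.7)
The plan is to translate the statement into an asymptotic statement about Schwarzian derivatives via Thurston's parametrization of projective structures, and then combine two complementary limit results along divergent rays in $Q(Y)$.

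For each $n$, let $\phi_n \in Q(Y)$ be the Schwarzian derivative of the projective structure on $Y$ whose Thurston parameters are $(X_n,\lambda_n)$, where $X_n = \gr_{\lambda_n}^{-1}(Y)$. Thurston's projective-structure parametrization $\T(S)\times\ML(S) \to P(S)$, together with the Scannell--Wolf extension of Tanigawa's theorem to all of $\ML(S)$, makes $\lambda_n \mapsto \phi_n$ a well-defined proper map, so $\lambda_n \to \infty$ in $\ML(S)$ forces $\|\phi_n\| \to \infty$ in $Q(Y)$. After passing to a subsequence, $\phi_n/\|\phi_n\|$ converges to some $\hat\phi_\infty \in Q(Y)$ with $\|\hat\phi_\infty\|=1$.

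The proof then reduces to two asymptotic comparisons along any such subsequence: (a) the grafting lamination is projectively asymptotic to the vertical foliation of the Schwarzian, $[\lambda_n] \to [\Lambda_Y(-\hat\phi_\infty)] = [i_Y\Lambda_Y(\hat\phi_\infty)]$ in $\PML(S)$; and (b) the ungrafted hyperbolic structure converges, in Thurston's compactification, to the horizontal foliation of the Schwarzian, $X_n \to [\Lambda_Y(\hat\phi_\infty)]$. Granted these, the hypothesis $[\lambda_n]\to[\lambda]$ together with (a) and $i_Y^2=\mathrm{id}$ forces $[\Lambda_Y(\hat\phi_\infty)] = [i_Y(\lambda)]$, and then (b) yields $X_n \to [i_Y(\lambda)]$ along the subsequence. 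Since the limit is independent of subsequence, the full sequence converges.

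The main obstacle is proving (a) and (b). Each projective structure $P_n$ carries three natural geometries whose interaction must be quantitatively controlled: the grafted piecewise flat--hyperbolic metric, the Thurston/Schwarzian flat metric $|\phi_n|$, and the uniformizing hyperbolic metric on $X_n$. For (a), I would use Thurston's pleated-plane description of the convex core of $P_n$ (bending along $\lambda_n$) together with flat-structure estimates showing that as $\|\phi_n\|\to\infty$ this pleating is approximated by bending along the vertical foliation of $\hat\phi_\infty$. For (b), I would use the Thurston collapse map from $P_n$ onto $X_n$ to compare hyperbolic lengths on $X_n$ with intersection numbers against $\Lambda_Y(\phi_n)$, via harmonic-map or extremal-length estimates. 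The most delicate step is matching the rescaling constants between (a) and (b) so that the projective limits line up correctly.
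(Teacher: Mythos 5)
The paper does not actually prove this theorem: it is imported wholesale from the reference [D1] (\emph{Grafting, pruning, and the antipodal map on measured laminations}), with only a remark that the special case $\lambda_n = 2\pi n\gamma$, $\gamma$ a simple closed curve, would suffice for the application and admits a simpler proof. So the comparison here is really with the proof in [D1]. Your reduction is, in fact, the same scaffolding used there: pass to the Schwarzian parametrization $\phi_n\in Q(Y)$ of the projective structure $\Gr_{\lambda_n}(X_n)$ with underlying conformal structure $Y$, use compactness of the unit sphere of $Q(Y)$ to extract $\hat\phi_\infty$, prove the two asymptotic comparisons (a) and (b), and close with the subsequence argument (which is valid, since the Thurston compactification is compact and metrizable and every subsequence has a further subsequence with the same limit $[i_Y(\lambda)]$). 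Your sign bookkeeping is internally consistent with the paper's conventions for $\Lambda_Y$ and $i_Y$, and correctly delivers the stated limit.

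The genuine gap is that (a) and (b) \emph{are} the theorem: everything you have actually argued is formal, and the two comparisons you defer to ``flat-structure estimates'' and ``harmonic-map or extremal-length estimates'' carry all of the mathematical content. In [D1] these occupy most of the paper: (a) requires a quantitative comparison between the grafting lamination of a projective structure and the vertical foliation of its Schwarzian with an error that is sublinear in $\|\phi_n\|^{1/2}$ (so that it vanishes after projective rescaling), obtained by comparing the Thurston metric of the grafted surface with the $|\phi_n|$-metric; (b) requires controlling hyperbolic lengths on $X_n$ via the collapsing map with errors of the same order; and, as you yourself note, the normalizations in (a) and (b) must be matched --- the intersection numbers scale like $\|\phi_n\|^{1/2}$ while $\lambda_n$ scales linearly in its own norm, and pinning down that the two rescaling sequences are commensurable is part of the work, not an afterthought. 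One further unproved step: your claim that $\lambda_n\to\infty$ forces $\|\phi_n\|\to\infty$ needs an argument (properness of $\lambda\mapsto\phi_n$ follows from a lower bound on $\|\phi\|$ in terms of the grafting data, again established in [D1]), though it is true. As submitted, your text is a correct reduction of the theorem to its two hard lemmas rather than a proof.
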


\begin{remark}
  While we have stated the above Theorem for arbitrary sequences of
  laminations, for our purposes it is enough to consider limits of
  sequences $\gr_{2 \pi n \gamma}^{-1}(Y)$, where $\gamma \in \scc$
  and $n \to \infty$.  The proof of Theorem \ref{thm:antipodal} in
  this special case is somewhat simpler.
\end{remark}

Combining this with the grafting description of $\W_Y \cap \T(S)$, we have:

\begin{cor}
\label{cor:accumulation}
For any $Y \in \T(S)$, the set $\W_Y \cap \T(S)$ accumulates on the
entire Thurston boundary $\PML(S) = \T(S)^\infty$.
\end{cor}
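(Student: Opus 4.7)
The plan is to combine Goldman's description of $\W_Y \cap \T(S)$ (Theorem \ref{thm:goldman}) with the convergence statement in Theorem \ref{thm:antipodal}, invoking the identification of the Thurston and logarithmic compactifications from Lemma \ref{lem:thurston-as-log} to conclude accumulation on $\T(S)^\infty$.

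Fix an arbitrary point $[\mu] \in \PML(S)$; the goal is to produce a sequence in $\W_Y \cap \T(S)$ converging to $[\mu]$. Since the $Y$-antipodal map $i_Y : \PML(S) \to \PML(S)$ is an involutive homeomorphism, I set $[\lambda] = i_Y([\mu])$, so that $i_Y([\lambda]) = [\mu]$. The set $\{ [\gamma] : \gamma \in \scc \}$ is dense in $\PML(S)$, so I choose simple closed curves $\gamma_k \in \scc$ with $[\gamma_k] \to [\lambda]$ in $\PML(S)$, and then set $\lambda_k = 2\pi k \gamma_k \in 2\pi \ML_\Z(S)$. Projectively $[\lambda_k] = [\gamma_k] \to [\lambda]$, and in $\ML(S)$ the sequence $\lambda_k$ diverges to infinity because $k \to \infty$ and each $\gamma_k$ is bounded away from $0 \in \ML(S)$.

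By Theorem \ref{thm:goldman}, the point $X_k = \gr_{\lambda_k}^{-1}(Y)$ belongs to $\W_Y \cap \T(S)$. Theorem \ref{thm:antipodal} then yields
\begin{equation*}
X_k = \gr_{\lambda_k}^{-1}(Y) \longrightarrow [\, i_Y(\lambda)\,] = [\mu]
\end{equation*}
in the Thurston compactification of $\T(S)$. By Lemma \ref{lem:thurston-as-log}, the Thurston compactification coincides with the logarithmic compactification, so this is exactly convergence of $X_k$ to $[\mu] \in \T(S)^\infty = \PML(S)$ inside $\overline{\X_\C(S)}$. Since $[\mu]$ was arbitrary, $\W_Y \cap \T(S)$ accumulates on all of $\PML(S)$.

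There is no real obstacle here; the only points to be careful about are (i) verifying that the approximating laminations can be chosen in the discrete set $2\pi \ML_\Z(S)$ while still diverging in norm and converging projectively, which is handled by the $2\pi k \gamma_k$ construction, and (ii) translating the output of Theorem \ref{thm:antipodal} from Thurston convergence to logarithmic convergence, for which Lemma \ref{lem:thurston-as-log} does the bookkeeping.
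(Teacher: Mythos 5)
Your proof is correct and rests on exactly the same ingredients as the paper's: Goldman's description of $\W_Y \cap \T(S)$ (Theorem \ref{thm:goldman}), the antipodal limit theorem (Theorem \ref{thm:antipodal}), the density of projective classes of weighted simple closed curves, and Lemma \ref{lem:thurston-as-log} to identify the two compactifications. The only difference is organizational: you reach each target $[\mu]$ directly via a diagonal sequence $2\pi k \gamma_k$ with varying projective class --- which uses the full generality of Theorem \ref{thm:antipodal} rather than the special case of sequences along a fixed ray singled out in the paper's remark --- whereas the paper takes limits along fixed rays to place each $i_Y([\lambda])$, $\lambda \in 2\pi\ML_\Z(S)$, in $\overline{\W_Y \cap \T(S)}$ and then concludes by density of these classes and closedness of the closure.
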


\begin{proof}
By Theorems \ref{thm:antipodal} and \ref{thm:goldman}, the closure of
$\W_Y \cap \T(S)$ contains all points $[\mu] \in \PML(S)$ of the form
$i_Y([\lambda])$, where $\lambda \in 2 \pi \ML_\Z(S)$.  Since the rays
$\{ [\lambda] \: | \: \lambda \in 2 \pi \ML_\Z(S)\}$ are dense in
$\PML(S)$, and $i_Y : \PML(S) \to \PML(S)$ is a homeomorphism, we find
that $\overline{\W_Y \cap \T(S)}$ is also dense in $\PML(S)$.  Since
$\overline{\W_Y \cap \T(S)}$ is closed, the Corollary follows.
\end{proof}

\section{Density}

We have seen in \S\ref{sec:qdhol} that Theorem \ref{thm:main} follows from

\begin{thm}
The Zariski closure of $\W_Y$ is $\X_\C(S)$.
\end{thm}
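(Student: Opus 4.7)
The plan is to pair the accumulation result of Corollary \ref{cor:accumulation} with Bergman's structure theorem (Theorem \ref{thm:log-dimension}). Writing $V$ for the Zariski closure of $\W_Y$ in $\X_\C(S)$, the goal is to prove $V = \X_\C(S)$. First I would locate the full Thurston boundary inside $V^\infty$. Every Fuchsian character satisfies $|\tr \rho(\gamma_i)| \geq 2 > 0$ on each element of the embedding set, so $\T(S) \subset \X_\C(S)^*$; in particular $\W_Y \cap \T(S) \subset \X_\C(S)^*$. Lemma \ref{lem:thurston-as-log} identifies the logarithmic compactification of $\T(S)$ in $S^{N-1}$ with its Thurston compactification, so Corollary \ref{cor:accumulation} says precisely that $\W_Y \cap \T(S)$ accumulates on every point of $\PML(S) \subset S^{N-1}$. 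Since $\W_Y \subseteq V$, this yields
$$ \PML(S) \subseteq \W_Y^\infty \subseteq V^\infty. $$

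Next I would carry out the dimension count. Suppose, for contradiction, that $V$ is a proper subvariety of $\X_\C(S)$. Since $\X_\C(S)$ is irreducible of complex dimension $6g-6$, this forces $d := \dim_\C V \leq 6g-7$. Bergman's Theorem \ref{thm:log-dimension} then confines $\cone(V^\infty)$ to a finite union of $d$-dimensional $\Q$-rational linear subspaces of $\R^N$, so $V^\infty$ lies in a finite union of rational great $(d-1)$-spheres of $S^{N-1}$, and hence has topological dimension at most $d-1 \leq 6g-8$. But $\PML(S)$ is a piecewise-linear manifold of dimension $6g-7$ contained in $V^\infty$, which is impossible; hence $V = \X_\C(S)$.

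Once Corollary \ref{cor:accumulation} is in hand there is no substantive obstacle remaining at this stage: the argument merely pairs a geometric input ($\W_Y$ accumulating on all of $\PML(S)$) with an algebraic one (Bergman's bound on the real dimension of the logarithmic limit set by $\dim_\C V - 1$). The decisive numerical coincidence is that $\dim \PML(S) = 6g-7$ equals $\dim_\C \X_\C(S) - 1$, leaving no room for a proper algebraic subvariety to contain $\W_Y$. The serious geometric work—Goldman's grafting description of $\W_Y \cap \T(S)$ and the antipodal limit theorem—has already been carried out in the preceding sections.
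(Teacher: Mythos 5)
Your proof is correct and takes essentially the same route as the paper: both combine Corollary \ref{cor:accumulation} and Lemma \ref{lem:thurston-as-log} to place $\PML(S)$ inside $V^\infty$, then invoke Bergman's Theorem \ref{thm:log-dimension} and the irreducibility of $\X_\C(S)$ to force $\dim_\C V = 6g-6$. The only cosmetic difference is that the paper runs the dimension count on the cone (noting $\ML(S) \subset \cone(V^\infty)$ contains an open subset of a $(6g-6)$-dimensional subspace) rather than via the topological dimension of $\PML(S)$ inside a union of great $(d-1)$-spheres.
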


\begin{proof}
Let $V \subset \X_\C(S)$ denote the Zariski closure of $\W_Y$.  By
Corollary \ref{cor:accumulation} and Lemma \ref{lem:thurston-as-log},
we have $\PML(S) \subset V^\infty$ and thus $\ML(S) \subset
\cone(V^\infty)$.  In particular $\cone(V^\infty)$ contains an open
subset of a subspace of $\R^N$ of dimension $-3 \chi(S)$.

On the other hand, by Theorem \ref{thm:log-dimension}, we have that
$\cone(V^\infty)$ is contained in a finite union of subspaces of real
dimension $d = \dim_\C(V)$.  Thus $\dim_\C V \geq -3 \chi(S)
= \dim_\C \X_\C(S)$.  Since the variety $\X_\C(S)$ is irreducible (see
\cite{goldman:topological-components}), the Theorem follows.
\end{proof}

The dimension count above also shows:

\begin{cor}
  For any $Y \in \T(S)$, the set $\{ \gr_{\lambda}^{-1}(Y) \: | \:
  \lambda \in 2 \pi \ML_\Z(S) \}$ is Zariski dense in $\X_\R(S)$.  The
  same is true of $\{ \gr_{\lambda}^{-1}(Y) \: | \: \lambda \in E\}$
  for any $E \subset 2 \pi \ML_\Z(S)$ that accumulates on an open
  subset of $\PML(S)$. 
\end{cor}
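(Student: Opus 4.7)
The plan is to reduce both assertions to the Bergman-style dimension count from the Theorem just proved, with only a cosmetic change to account for the passage to $\X_\R(S)$. Writing $T_E = \{\gr_\lambda^{-1}(Y) \: | \: \lambda \in E\}$ and letting $V_E$ denote the Zariski closure of $T_E$ in $\X_\C(S)$, I would observe that it suffices to show $V_E = \X_\C(S)$: since $\X_\R(S) \subset \X_\C(S)$ and $\X_\C(S)$ is irreducible, the Zariski closure of $T_E$ in $\X_\R(S)$ is then $V_E \cap \X_\R(S) = \X_\R(S)$.

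First I would treat the case $E = 2\pi \ML_\Z(S)$, where $T_E = \W_Y \cap \T(S)$ by Theorem \ref{thm:goldman}. Corollary \ref{cor:accumulation} together with Lemma \ref{lem:thurston-as-log} give $\PML(S) \subset T_E^\infty \subset V_E^\infty$, so $\cone(V_E^\infty) \supset \ML(S)$. Since $\ML(S)$ sits in $\R^N$ as a piecewise-linear cone of dimension $-3\chi(S)$, it contains an open subset of some $(-3\chi(S))$-dimensional linear subspace; Theorem \ref{thm:log-dimension} then forces $\dim_\C V_E \geq -3\chi(S) = \dim_\C \X_\C(S)$, and irreducibility yields $V_E = \X_\C(S)$.

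For the second statement, suppose $E \subset 2\pi\ML_\Z(S)$ has projective classes accumulating on an open set $U \subset \PML(S)$. Applying Theorem \ref{thm:antipodal} to sequences $\lambda_n \in E$ with $[\lambda_n] \to [\mu] \in U$, the images $\gr_{\lambda_n}^{-1}(Y)$ converge in the Thurston compactification to $i_Y([\mu])$, so $T_E^\infty \supset i_Y(U)$. Because $i_Y \colon \PML(S) \to \PML(S)$ is a homeomorphism, $i_Y(U)$ is open in $\PML(S)$ and hence $\cone(i_Y(U))$ is open in $\ML(S)$. This open subset must meet the relative interior of some top-dimensional polyhedral cell of the PL structure on $\ML(S)$, and therefore contains an open subset of the affine hull of that cell---a linear subspace of $\R^N$ of dimension $-3\chi(S)$. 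The same Bergman dimension bound then gives $V_E = \X_\C(S)$.

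The only ingredient beyond the Theorem's proof is the transfer of openness through $i_Y$, combined with the piecewise-linear structure of $\ML(S) \subset \R^N$ ensuring that any open subset of $\ML(S)$ is full-dimensional in the sense required by Theorem \ref{thm:log-dimension}. I do not anticipate any serious obstacle: both facts are essentially built into the statements already in use, and the reduction from Zariski density in $\X_\R(S)$ to Zariski density in $\X_\C(S)$ is immediate from the irreducibility recalled in \cite{goldman:topological-components}.
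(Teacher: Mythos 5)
Your proposal is correct and follows the paper's argument exactly: the paper offers no separate proof of this corollary beyond the remark that ``the dimension count above also shows'' it, and your write-up simply carries out that same count, adding the routine details (reduction of real Zariski density to complex Zariski density via irreducibility, transfer of openness through the homeomorphism $i_Y$, and the fact that an open subset of the piecewise-linear cone $\ML(S)\subset\R^N$ is open in a $(-3\chi(S))$-dimensional subspace) that the authors leave implicit.
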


\nocite{}

\bigskip

\noindent Department of Mathematics, Statistics, and Computer Science,
University of Illinois at Chicago
\newline \noindent \texttt{ddumas@math.uic.edu}

\medskip

\noindent Department of Mathematics, Brown University
\newline \noindent  \texttt{rkent@math.brown.edu} 

\end{document}